\newtheorem{theorem}{Theorem}
\theoremstyle{plain}
\newtheorem{corollary}{Corollary}
\newtheorem{definition}{Definition}
\newtheorem{lemma}{Lemma}
\newtheorem{remark}{Remark}
\numberwithin{equation}{section}
\begin{document}
\title[INEQUALITIES VIA FRACTIONAL INTEGRALS]{HADAMARD TYPE INEQUALITIES FOR 
$m-$CONVEX AND $(\alpha ,m)-$CONVEX FUNCTIONS VIA FRACTIONAL INTEGRALS}
\author{M. Emin \"{O}zdemir$^{\blacklozenge }$}
\address{$^{\blacklozenge }$Ataturk University, K.K. Education Faculty,
Department of Mathematics, 25240, Erzurum, Turkey}
\email{emos@atauni.edu.tr}
\author{Merve Avc\i $^{\clubsuit ,\bigstar }$}
\address{$^{\clubsuit }$Ad\i yaman University, Faculty of Science and Arts,
Department of Mathematics, 02040, Ad\i yaman, Turkey}
\email{merveavci@ymail.com}
\author{Ahmet Ocak Akdemir$^{\spadesuit }$}
\address{$^{\spadesuit }$A\u{g}r\i\ \.{I}brahim \c{C}e\c{c}en University,
Faculty of Science and Arts, Department of Mathematics, 04100, A\u{g}r\i ,
Turkey}
\email{ahmetakdemir@agri.edu.tr}
\author{$^{\spadesuit }$Alper Ekinci}
\email{alperekinci@hotmail.com}
\thanks{$^{\bigstar }$Corresponding Author}
\subjclass{}
\keywords{$m-$convex functions, $(\alpha ,m)-$convex functions,
Riemann-Liouville fractional integral.}

\begin{abstract}
In this paper, we established some new Hadamard-type integral inequalities
for functions whose derivatives of absolute values are $m-$convex and $%
(\alpha ,m)-$convex functions via Riemann-Liouville fractional integrals.
\end{abstract}

\maketitle

\section{Introduction}

Let $f:I\subset 
\mathbb{R}
\rightarrow 
\mathbb{R}
$ be a convex function defined on the interval $I$ of real numbers and $%
a,b\in I$ with $a<b$. The following inequality is well known in the
literature as the Hermite--Hadamard inequality:%
\begin{equation*}
f\left( \frac{a+b}{2}\right) \leq \frac{1}{b-a}\int_{a}^{b}f(x)dx\leq \frac{%
f(a)+f(b)}{2}.
\end{equation*}

In \cite{T}$,$ G.Toader defined the concept of m-convexity as the following:

\begin{definition}
\label{def 1.1} The function $f:[0,b]\rightarrow 
\mathbb{R}
$ is said to be $m-$convex, where $m\in \lbrack 0,1],$ if for every $x,y\in
\lbrack 0,b]$ and $t\in \lbrack 0,1]$ we have:%
\begin{equation*}
f(tx+m(1-t)y)\leq tf(x)+m(1-t)f(y).
\end{equation*}%
Denote by $K_{m}(b)$ the set of the $m-$convex functions on $[0,b]$ for
which $f(0)\leq 0.$
\end{definition}

Several papers have been written on $m-$convex functions and we refer the
papers \cite{BOP}-\cite{DT}.

In \cite{DT}, the following inequality of Hermite-Hadamard type for $m-$%
convex functions holds:

\begin{theorem}
\label{teo 1.1} Let $f:[0,\infty )\rightarrow 
\mathbb{R}
$ be a $m-$convex function with $m\in (0,1].$ If $0\leq a<b<\infty $ and $%
f\in L_{1}[a,b],$ then one has the inequality: 
\begin{equation}
\frac{1}{b-a}\int_{a}^{b}f(x)dx\leq \min \left\{ \frac{f(a)+mf(\frac{b}{m})}{%
2},\frac{f(b)+mf(\frac{a}{m})}{2}\right\} .  \label{1.1}
\end{equation}%
In \cite{SS}, S.S. Dragomir proved the following theorem.
\end{theorem}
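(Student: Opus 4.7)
The plan is to exploit the definition of $m$-convexity with two different parametrizations of the segment $[a,b]$, both of which only evaluate $f$ at points in its natural domain $[0,\infty)$. For the first bound in the minimum, I would write an arbitrary point of $[a,b]$ as a convex combination of $a$ and $b/m$ in the form mandated by the definition. Specifically, for $t\in[0,1]$ observe that
$$ta+(1-t)b \;=\; t\cdot a + m(1-t)\cdot\frac{b}{m},$$
which matches $tx+m(1-t)y$ with $x=a$ and $y=b/m$. Applying $m$-convexity yields
$$f\bigl(ta+(1-t)b\bigr) \;\leq\; t f(a) + m(1-t) f\!\left(\tfrac{b}{m}\right).$$

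The second step is to integrate this inequality over $t\in[0,1]$. The right-hand side integrates explicitly to $\tfrac{1}{2}f(a)+\tfrac{m}{2}f(b/m)$. On the left, the affine substitution $u=ta+(1-t)b$, with $du=(a-b)\,dt$, transforms $\int_0^1 f(ta+(1-t)b)\,dt$ into $\frac{1}{b-a}\int_a^b f(u)\,du$, the hypothesis $f\in L_1[a,b]$ justifying the manipulation. Combining these produces the first candidate bound $\frac{f(a)+mf(b/m)}{2}$.

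For the second candidate I would repeat the argument with the roles of $a$ and $b$ swapped: write $tb+(1-t)a = t\cdot b + m(1-t)\cdot\frac{a}{m}$, apply $m$-convexity to get $f(tb+(1-t)a)\leq tf(b)+m(1-t)f(a/m)$, integrate over $t\in[0,1]$, and perform the analogous change of variable. This produces $\frac{1}{b-a}\int_a^b f(u)\,du \leq \frac{f(b)+mf(a/m)}{2}$. Since both inequalities bound the same mean value of $f$, taking the smaller right-hand side delivers the claimed minimum.

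I do not anticipate any serious obstacle: the argument is a direct application of the $m$-convexity definition followed by a standard affine substitution inside the integral. The only point worth verifying is that $b/m$ and $a/m$ lie in the domain $[0,\infty)$ of $f$, which is automatic since $m\in(0,1]$ and $0\leq a<b$, so both right-hand sides are well defined and the procedure is legitimate.
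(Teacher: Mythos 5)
Your proof is correct and is essentially the argument the paper itself relies on: the paper's Theorem \ref{teo 2.1} is proved by applying $m$-convexity to $f(ta+(1-t)b)$ and $f(tb+(1-t)a)$ with the endpoints $b/m$, $a/m$, multiplying by $t^{\alpha-1}$, and integrating over $[0,1]$, and your argument is precisely the $\alpha=1$ case of that (cf.\ Remark \ref{rem 2.1}). The substitution, the computation of the right-hand integrals, and the domain check are all in order.
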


\begin{theorem}
\label{teo 1.2} Let $f:[0,\infty )\rightarrow 
\mathbb{R}
$ be a $m-$convex function with $m\in (0,1].$ If $f\in L_{1}[am,b]$ where $%
0\leq a<b,$ then one has the inequality:%
\begin{eqnarray}
&&\frac{1}{m+1}\left[ \frac{1}{mb-a}\int_{a}^{mb}f(x)dx+\frac{1}{b-ma}%
\int_{ma}^{b}f(x)dx\right]  \label{1.2} \\
&\leq &\frac{f(a)+f(b)}{2}.  \notag
\end{eqnarray}%
In \cite{MIH}, Mihe\c{s}an gave definition of $(\alpha ,m)-$convexity as
following;
\end{theorem}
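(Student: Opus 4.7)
The plan is to derive the inequality directly from the definition of $m$-convexity by applying it twice to the pair $(a,b)$, integrating over the convex-combination parameter, and then translating the resulting integrals via linear substitutions into those appearing in the statement.

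First I would write, for each $t\in[0,1]$, the two symmetric $m$-convexity inequalities
\begin{equation*}
f(ta+m(1-t)b)\le tf(a)+m(1-t)f(b),\qquad f(tb+m(1-t)a)\le tf(b)+m(1-t)f(a).
\end{equation*}
Adding them and integrating in $t$ over $[0,1]$ produces
\begin{equation*}
\int_0^1 f(ta+m(1-t)b)\,dt+\int_0^1 f(tb+m(1-t)a)\,dt\le \frac{1+m}{2}\bigl(f(a)+f(b)\bigr),
\end{equation*}
since $\int_0^1\bigl(t+m(1-t)\bigr)\,dt=\tfrac{1+m}{2}$.

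Next I would perform the linear substitution $u=ta+m(1-t)b$ in the first integral, which sends $[0,1]$ (in reverse) onto $[a,mb]$ with Jacobian $mb-a$, and the symmetric substitution $u=tb+m(1-t)a$ in the second, which sends $[0,1]$ onto $[ma,b]$ with Jacobian $b-ma$. This turns the left-hand side into
\begin{equation*}
\frac{1}{mb-a}\int_a^{mb}f(u)\,du+\frac{1}{b-ma}\int_{ma}^b f(u)\,du,
\end{equation*}
and dividing by $m+1$ gives exactly the claimed bound.

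I do not expect a genuine obstacle: everything reduces to the observation that summing the two $m$-convexity inequalities produces the symmetric weight $t+m(1-t)$ whose integral is $\tfrac{1+m}{2}$, a factor that cancels neatly against the prefactor $\tfrac{1}{m+1}$ on the left. The only mild point of care is ensuring that the substitutions are nondegenerate, i.e.\ that $mb-a>0$ and $b-ma>0$; the latter follows from $m\le 1$ and $a<b$, while the former is an implicit hypothesis of the statement (automatic when $a=0$, and otherwise equivalent to $m>a/b$) already built in so that $\int_a^{mb}f$ is meaningful.
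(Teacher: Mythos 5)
Your argument is correct, and it is worth noting that the paper itself gives no proof of this statement (it is imported from Dragomir \cite{SS}); your derivation is the natural one and coincides with the $\alpha =1$ specialization of the paper's own proof of its fractional analogue, Theorem \ref{teo 2.3}, where the same $m$-convexity inequalities are summed and integrated against the weight $t^{\alpha -1}$. The only cosmetic difference is that the paper adds four inequalities (including the $t\mapsto 1-t$ reflections of yours) because $t^{\alpha -1}$ is not a symmetric weight, whereas for the uniform weight your two inequalities already produce both averages after the substitutions $u=ta+m(1-t)b$ and $u=tb+m(1-t)a$, and your bookkeeping of the factor $\tfrac{1+m}{2}$ and of the sign of $mb-a$ is accurate.
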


\begin{definition}
The function $f:[0,b]\rightarrow 
\mathbb{R}
,$ $b>0$ is said to be $(\alpha ,m)-$convex, where $(\alpha ,m)\in \lbrack
0,1]^{2},$ if we have%
\begin{equation*}
f(tx+m(1-t)y)\leq t^{\alpha }f(x)+m(1-t^{\alpha })f(y)
\end{equation*}%
for all $x,y\in \lbrack 0,b]$ and $t\in \lbrack 0,1].$
\end{definition}

Denote by $K_{m}^{\alpha }(b)$ the class of all $(\alpha ,m)-$convex
functions on $[0,b]$ for which $f(0)\leq 0.$ If we choose $(\alpha ,m)=(1,m)$%
, it can be easily seen that $(\alpha ,m)-$convexity reduces to $m-$%
convexity and for $(\alpha ,m)=(1,1),$ we have ordinary convex functions on $%
[0,b].$ For the recent results based on the above definition see the papers 
\cite{BOP}, \cite{BPR}, \cite{3}, \cite{SET}, \cite{10}, and \cite{5}.

We give some necessary definitions and mathematical preliminaries of
fractional calculus theory which are used throughout this paper.

\begin{definition}
\label{def 1.2} Let $f\in L_{1}[a,b].$ The Riemann-Liouville integrals $%
J_{a^{+}}^{\alpha }f$ and $J_{b^{-}}^{\alpha }f$ of order $\alpha >0$ with $%
a\geq 0$ are defined by%
\begin{equation*}
J_{a^{+}}^{\alpha }f(x)=\frac{1}{\Gamma (\alpha )}\int_{a}^{x}(x-t)^{\alpha
-1}f(t)dt,\text{ \ \ \ \ }x>a
\end{equation*}%
and 
\begin{equation*}
J_{b^{-}}^{\alpha }f(x)=\frac{1}{\Gamma (\alpha )}\int_{x}^{b}(t-x)^{\alpha
-1}f(t)dt,\text{ \ \ \ \ }x<b
\end{equation*}%
where $\Gamma (\alpha )=\int_{0}^{\infty }e^{-t}u^{\alpha -1}du,$ here is $%
J_{a^{+}}^{0}f(x)=J_{b^{-}}^{0}f(x)=f(x).$
\end{definition}

In the case of $\alpha =1,$ the fractional integral reduces to the classical
integral. Properties of this operator can be found in the references \cite%
{GM}-\cite{P}.

In \cite{SSYB}, Sarikaya \textit{et al.} proved a variant of the identity is
established by Dragomir and Agarwal in \cite[Lemma 2.1]{DA} for fractional
integrals as the following.

\begin{lemma}
\label{lem 1.1} Let $f:[a,b]\rightarrow 
\mathbb{R}
$ be a differentiable mapping on $(a,b)$ with $a<b.$ If $f^{\prime }\in
L[a,b],$ then the following equality for fractional integrals holds:%
\begin{eqnarray*}
&&\frac{f(a)+f(b)}{2}-\frac{\Gamma (\alpha +1)}{2(b-a)^{\alpha }}\left[
J_{a^{+}}^{\alpha }f(b)+J_{b^{-}}^{\alpha }f(a)\right] \\
&=&\frac{b-a}{2}\int_{0}^{1}\left[ \left( 1-t\right) ^{\alpha }-t^{\alpha }%
\right] f^{\prime }(ta+(1-t)b)dt.
\end{eqnarray*}
\end{lemma}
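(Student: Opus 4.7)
The natural plan is to verify the identity by computing the right-hand side directly: split the bracketed factor so that
\[
I := \int_{0}^{1}\left[(1-t)^{\alpha}-t^{\alpha}\right]f'(ta+(1-t)b)\,dt = I_{1}-I_{2},
\]
where $I_{1}$ carries the $(1-t)^{\alpha}$ weight and $I_{2}$ carries the $t^{\alpha}$ weight. Each of these is a product of a polynomial in $t$ and a derivative of a composition, so the obvious move is integration by parts, differentiating the weight and antidifferentiating $f'(ta+(1-t)b)$ to get $f(ta+(1-t)b)/(a-b)$.

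For $I_{1}$, integration by parts with $u=(1-t)^{\alpha}$, $dv=f'(ta+(1-t)b)\,dt$ eliminates the boundary term at $t=1$ and contributes $f(b)/(b-a)$ at $t=0$, leaving an integral of $(1-t)^{\alpha-1}f(ta+(1-t)b)$. I would then convert this back to a Riemann--Liouville form by the substitution $u=ta+(1-t)b$, for which $1-t=(u-a)/(b-a)$ and $dt=-du/(b-a)$; the resulting integrand $(u-a)^{\alpha-1}f(u)$ matches exactly the integrand in the definition of $J_{a^{+}}^{\alpha}f(b)$ up to a power of $(b-a)$. Using $\alpha\,\Gamma(\alpha)=\Gamma(\alpha+1)$, this should produce
\[
I_{1}=\frac{f(b)}{b-a}-\frac{\Gamma(\alpha+1)}{(b-a)^{\alpha+1}}J_{a^{+}}^{\alpha}f(b).
\]
A parallel computation for $I_{2}$, again with the same substitution but now reading $t=(b-u)/(b-a)$ so that the integrand becomes $(b-u)^{\alpha-1}f(u)$, should give
\[
I_{2}=-\frac{f(a)}{b-a}+\frac{\Gamma(\alpha+1)}{(b-a)^{\alpha+1}}J_{b^{-}}^{\alpha}f(a).
\]
Subtracting and multiplying by $(b-a)/2$ then yields the claimed identity.

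The only real obstacles are bookkeeping. First, the antiderivative $f(ta+(1-t)b)/(a-b)$ introduces a sign flip that must be tracked through both integrations by parts; I expect this to be the single most error-prone step. Second, the change of variables reverses orientation, producing another sign that should neatly cancel against the $-du/(b-a)$ factor so that the result lines up with the standard $J_{a^{+}}^{\alpha}$ and $J_{b^{-}}^{\alpha}$ conventions from Definition~\ref{def 1.2}. The hypothesis $f'\in L[a,b]$ is exactly what is needed to justify the integration by parts and to guarantee that the terms involving $f$ (evaluated via the fundamental theorem of calculus) make sense pointwise at the endpoints.
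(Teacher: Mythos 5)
The paper does not actually prove this lemma: it is imported verbatim from Sarikaya et al.\ \cite{SSYB} as a fractional analogue of \cite[Lemma 2.1]{DA}, so there is no in-paper proof to compare against. Your integration-by-parts plan is the standard argument for this identity and it does work: with $v=f(ta+(1-t)b)/(a-b)$ the boundary terms of $I_1-I_2$ combine to $\frac{f(a)+f(b)}{b-a}$, and the surviving integrals, after the substitution $u=ta+(1-t)b$, are exactly the two Riemann--Liouville integrals with the factor $\alpha\Gamma(\alpha)=\Gamma(\alpha+1)$ and the power $(b-a)^{-(\alpha+1)}$ you predict, so multiplying by $\frac{b-a}{2}$ gives the claim. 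The one concrete error is a label swap: under Definition~\ref{def 1.2}, the weight $(1-t)^{\alpha-1}$ transforms into $(u-a)^{\alpha-1}$ and $\int_a^b(u-a)^{\alpha-1}f(u)\,du=\Gamma(\alpha)\,J_{b^-}^{\alpha}f(a)$, not $\Gamma(\alpha)\,J_{a^+}^{\alpha}f(b)$; symmetrically, the $t^{\alpha}$ piece produces $J_{a^+}^{\alpha}f(b)$. So your displayed formulas for $I_1$ and $I_2$ each carry the wrong operator. Because the lemma involves only the sum $J_{a^+}^{\alpha}f(b)+J_{b^-}^{\alpha}f(a)$, the final identity is unaffected, but the intermediate formulas should be corrected before this is written up.
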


The aim of this paper is to establish Hadamard type inequalities for $m-$%
convex and $(\alpha ,m)-$convex functions via Riemann-Liouville fractional
integrals.

\section{Inequalities for $m-$convex functions}

\begin{theorem}
\label{teo 2.1} Let $f:[a,b]\rightarrow 
\mathbb{R}
$ be a positive function with $0\leq a<b$ and $f\in L_{1}[a,b].$ If $f$ is a 
$m-$convex function on $[a,b],$ then the following inequalities for
fractional integrals with $\alpha >0$ and $m\in (0,1]$ hold:%
\begin{eqnarray}
&&  \label{2.1} \\
\frac{\Gamma (\alpha )}{(b-a)^{\alpha }}J_{a^{+}}^{\alpha }f(b) &\leq &\frac{%
f(a)}{\alpha +1}+mf\left( \frac{b}{m}\right) \frac{\Gamma (\alpha )\Gamma (2)%
}{\Gamma (\alpha +2)},  \notag \\
\frac{\Gamma (\alpha )}{(b-a)^{\alpha }}J_{b^{-}}^{\alpha }f(a) &\leq &\frac{%
f(b)}{\alpha +1}+mf\left( \frac{a}{m}\right) \frac{\Gamma (\alpha )\Gamma (2)%
}{\Gamma (\alpha +2)}.  \notag
\end{eqnarray}%
where $\Gamma $ is Euler Gamma function.
\end{theorem}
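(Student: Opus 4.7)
The plan is to reduce each of the two fractional integrals to an integral over $[0,1]$ via the affine change of variable $t = sa+(1-s)b$, then invoke $m$-convexity on the integrand in the form best matched to the weight that remains.

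For the first inequality, starting from
\begin{equation*}
\frac{\Gamma(\alpha)}{(b-a)^{\alpha}}J_{a^{+}}^{\alpha}f(b)=\frac{1}{(b-a)^{\alpha}}\int_{a}^{b}(b-t)^{\alpha-1}f(t)\,dt,
\end{equation*}
the substitution $t=sa+(1-s)b$ gives $b-t=s(b-a)$ and $dt=-(b-a)\,ds$, so the expression collapses to $\int_{0}^{1}s^{\alpha-1}f(sa+(1-s)b)\,ds$. Writing $sa+(1-s)b = s\cdot a + m(1-s)\cdot(b/m)$, I apply $m$-convexity with $x=a$, $y=b/m$ to obtain $f(sa+(1-s)b)\le sf(a)+m(1-s)f(b/m)$. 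Integrating against $s^{\alpha-1}$ then produces $f(a)\int_{0}^{1}s^{\alpha}\,ds + mf(b/m)\int_{0}^{1}s^{\alpha-1}(1-s)\,ds$, and the second integral is the Beta function $B(\alpha,2)=\Gamma(\alpha)\Gamma(2)/\Gamma(\alpha+2)$, which is exactly the claimed bound.

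For the second inequality I use the same change of variable on $\frac{1}{(b-a)^{\alpha}}\int_{a}^{b}(t-a)^{\alpha-1}f(t)\,dt$; now $t-a=(1-s)(b-a)$, so the integral reduces to $\int_{0}^{1}(1-s)^{\alpha-1}f(sa+(1-s)b)\,ds$. To match the weight $(1-s)^{\alpha-1}$, I rewrite the argument as $(1-s)b + m s(a/m)$ and apply $m$-convexity with parameter $1-s$, yielding $f(sa+(1-s)b)\le (1-s)f(b)+msf(a/m)$. Integrating term by term gives $\frac{f(b)}{\alpha+1} + mf(a/m)B(2,\alpha)$, which is the second claim.

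There is no real obstacle here, just a bookkeeping choice: the key trick is to pair each $m$-convexity expansion with the right endpoint so that the two factors $s$ and $1-s$ in the bound combine with $s^{\alpha-1}$ or $(1-s)^{\alpha-1}$ to produce $s^{\alpha}$, $(1-s)^{\alpha}$, and a clean Beta integrand. Care with the orientation of the substitution (which endpoint corresponds to $s=0$) is what dictates whether $b/m$ or $a/m$ appears on the right.
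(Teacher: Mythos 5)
Your proof is correct and follows essentially the same route as the paper: both reduce $\frac{\Gamma(\alpha)}{(b-a)^{\alpha}}J_{a^{+}}^{\alpha}f(b)$ and $\frac{\Gamma(\alpha)}{(b-a)^{\alpha}}J_{b^{-}}^{\alpha}f(a)$ to weighted integrals of $f$ along the segment, apply the $m$-convexity bound with $y=b/m$ (resp.\ $y=a/m$), and evaluate the resulting Beta integral $\beta(\alpha,2)=\Gamma(\alpha)\Gamma(2)/\Gamma(\alpha+2)$. The only cosmetic difference is that you use a single parametrization $sa+(1-s)b$ and switch which endpoint carries the factor $s$, while the paper uses the two parametrizations $ta+(1-t)b$ and $tb+(1-t)a$ each weighted by $t^{\alpha-1}$; these coincide under $s\mapsto 1-s$.
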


\begin{proof}
Since $f$ is a $m-$convex function on $[a,b],$ we know that for any $t\in
\lbrack 0,1]$%
\begin{equation}
f(ta+(1-t)b)\leq tf(a)+m(1-t)f\left( \frac{b}{m}\right)  \label{2.2}
\end{equation}%
and 
\begin{equation}
f(tb+(1-t)a)\leq tf(b)+m(1-t)f\left( \frac{a}{m}\right) .  \label{2.3}
\end{equation}%
By multiplying both sides of (\ref{2.2}) by $t^{\alpha -1},$ then by
integrating the resulting inequality with respect to $t$ over $[0,1],$ we
obtain%
\begin{eqnarray*}
\int_{0}^{1}t^{\alpha -1}f(ta+(1-t)b)dt &\leq &\int_{0}^{1}t^{\alpha -1} 
\left[ tf(a)+m(1-t)f\left( \frac{b}{m}\right) \right] \\
&=&\frac{f(a)}{\alpha +1}+mf\left( \frac{b}{m}\right) \beta (\alpha ,2).
\end{eqnarray*}%
It is easy to see that $\int_{0}^{1}t^{\alpha -1}f(ta+(1-t)b)dt=\frac{\Gamma
(\alpha )}{(b-a)^{\alpha }}J_{a^{+}}^{\alpha }f(b)$ and we note that, the
Beta and the Gamma function (see \cite[pp 908-910]{GR}) 
\begin{equation*}
\beta (x,y)=\int_{0}^{1}t^{x-1}\left( 1-t\right) ^{y-1}dt,\text{ \ }x,y>0,%
\text{ \ \ \ \ }\Gamma (x)=\int_{0}^{\infty }e^{-t}t^{x-1}dt,\text{ \ }x>0
\end{equation*}%
are used to evaluate the integral%
\begin{equation*}
\int_{0}^{1}t^{\alpha -1}(1-t)dt,
\end{equation*}%
where 
\begin{equation*}
\beta (x,y)=\frac{\Gamma (x)\Gamma (y)}{\Gamma (x+y)},
\end{equation*}%
thus we can obtain that%
\begin{equation*}
\beta (\alpha ,2)=\frac{\Gamma (\alpha )\Gamma (2)}{\Gamma (\alpha +2)}
\end{equation*}%
which completes the proof.

For the proof of the second inequality in (\ref{2.1}) we multiply both sides
of (\ref{2.3}) by $t^{\alpha -1},$ then integrate the resulting inequality
with respect to $t$ over $[0,1].$
\end{proof}

\begin{remark}
\label{rem 2.1} If we choose $\alpha =1$ in Theorem \ref{teo 2.1}, then the
inequalities (\ref{2.1}) become the inequality in (1.1).
\end{remark}

\begin{theorem}
\label{teo 2.2} Let $f:I^{\circ }\subset \lbrack 0,\infty )\rightarrow 
\mathbb{R}
$, be a differentiable function on $I^{\circ }$ such that $f^{\prime }\in
L[a,b]$ where $a,b\in I,a<b.$ If $\left\vert f^{\prime }\right\vert ^{q}$ is 
$m-$convex on $[a,b]$ for some fixed $m\in (0,1]$ and $q\geq 1,$ then the
following inequality for fractional integrals holds:%
\begin{eqnarray*}
&&\left\vert \frac{f(a)+f(b)}{2}-\frac{\Gamma (\alpha +1)}{2(b-a)^{\alpha }}%
\left[ J_{a^{+}}^{\alpha }f(b)+J_{b^{-}}^{\alpha }f(a)\right] \right\vert \\
&\leq &\frac{b-a}{2}2^{1-\frac{1}{q}}\left[ \frac{2^{\alpha }-1}{2^{\alpha
}(\alpha +1)}\right] \left[ \left\vert f^{\prime }(a)\right\vert
^{q}+m\left\vert f^{\prime }\left( \frac{b}{m}\right) \right\vert ^{q}\right]
^{\frac{1}{q}}.
\end{eqnarray*}
\end{theorem}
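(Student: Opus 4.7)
The plan is to combine Lemma \ref{lem 1.1} with the power-mean inequality and the $m$-convexity hypothesis on $|f'|^q$. First I would apply Lemma \ref{lem 1.1} to rewrite the left-hand side as $\frac{b-a}{2}\int_{0}^{1}\left[(1-t)^{\alpha}-t^{\alpha}\right] f'(ta+(1-t)b)\,dt$ and then take absolute values inside the integral so that the integrand becomes $|(1-t)^{\alpha}-t^{\alpha}|\cdot|f'(ta+(1-t)b)|$. Since $q\geq 1$, I would then apply the power-mean (weighted H\"older) inequality with weight $w(t)=|(1-t)^{\alpha}-t^{\alpha}|$, splitting the integral as $\left(\int_{0}^{1}w(t)\,dt\right)^{1-1/q}\left(\int_{0}^{1}w(t)|f'(ta+(1-t)b)|^{q}\,dt\right)^{1/q}$.

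For the second factor I would use the $m$-convexity of $|f'|^{q}$. Writing $ta+(1-t)b = t\cdot a + m(1-t)\cdot(b/m)$, the definition gives $|f'(ta+(1-t)b)|^{q}\leq t|f'(a)|^{q}+m(1-t)|f'(b/m)|^{q}$. Substituting, the problem reduces to evaluating the three explicit integrals $I_{0}=\int_{0}^{1}|(1-t)^{\alpha}-t^{\alpha}|\,dt$, $I_{1}=\int_{0}^{1}t|(1-t)^{\alpha}-t^{\alpha}|\,dt$, and $I_{2}=\int_{0}^{1}(1-t)|(1-t)^{\alpha}-t^{\alpha}|\,dt$.

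The key computational step is the clean evaluation of these. I would use the observation that $(1-t)^{\alpha}\geq t^{\alpha}$ exactly on $[0,1/2]$ and split each integral at $t=1/2$; a short antiderivative computation yields $I_{0}=\frac{2(2^{\alpha}-1)}{2^{\alpha}(\alpha+1)}$. To avoid a messy direct calculation for $I_{1}$, I would use the symmetry substitution $t\mapsto 1-t$, which shows $I_{1}=I_{2}$, and since $I_{1}+I_{2}=I_{0}$ trivially, each equals $\frac{2^{\alpha}-1}{2^{\alpha}(\alpha+1)}$. Assembling everything, the factor $(I_{0})^{1-1/q}$ times $\bigl(I_{1}|f'(a)|^{q}+mI_{2}|f'(b/m)|^{q}\bigr)^{1/q}$ collapses to $2^{1-1/q}\cdot\frac{2^{\alpha}-1}{2^{\alpha}(\alpha+1)}\cdot\bigl[|f'(a)|^{q}+m|f'(b/m)|^{q}\bigr]^{1/q}$, which gives exactly the stated bound after multiplication by $\frac{b-a}{2}$.

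The main obstacle is the evaluation of $I_{1}$ (and hence $I_{2}$): a head-on split-and-antidifferentiate approach produces several cancelling boundary terms at $t=1/2$, so the decisive trick is recognizing the $t\mapsto 1-t$ symmetry together with the identity $I_{1}+I_{2}=I_{0}$ to sidestep it.
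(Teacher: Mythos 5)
Your proposal is correct and follows essentially the same route as the paper: Lemma \ref{lem 1.1}, the power-mean (H\"older) splitting with weight $\left\vert (1-t)^{\alpha }-t^{\alpha }\right\vert $, the $m$-convexity bound $\left\vert f^{\prime }(ta+(1-t)b)\right\vert ^{q}\leq t\left\vert f^{\prime }(a)\right\vert ^{q}+m(1-t)\left\vert f^{\prime }(b/m)\right\vert ^{q}$, and the same value of $\int_{0}^{1}\left\vert (1-t)^{\alpha }-t^{\alpha }\right\vert dt$. The only difference is cosmetic: the paper evaluates $I_{1}$ and $I_{2}$ by brute-force antidifferentiation on $[0,1/2]$ and $[1/2,1]$ and treats $q=1$ as a separate case, whereas your symmetry argument $I_{1}=I_{2}=I_{0}/2$ is cleaner and handles all $q\geq 1$ uniformly.
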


\begin{proof}
Suppose that $q=1.$ From Lemma \ref{lem 1.1} and by using the properties of
modulus, we have 
\begin{eqnarray*}
&&\left\vert \frac{f(a)+f(b)}{2}-\frac{\Gamma (\alpha +1)}{2(b-a)^{\alpha }}%
\left[ J_{a^{+}}^{\alpha }f(b)+J_{b^{-}}^{\alpha }f(a)\right] \right\vert \\
&\leq &\frac{b-a}{2}\int_{0}^{1}\left\vert \left( 1-t\right) ^{\alpha
}-t^{\alpha }\right\vert \left\vert f^{\prime }(ta+(1-t)b)\right\vert dt.
\end{eqnarray*}%
Since $\left\vert f^{\prime }\right\vert $ is $m-$convex on $[a,b],$ we have 
\begin{eqnarray*}
&&\left\vert \frac{f(a)+f(b)}{2}-\frac{\Gamma (\alpha +1)}{2(b-a)^{\alpha }}%
\left[ J_{a^{+}}^{\alpha }f(b)+J_{b^{-}}^{\alpha }f(a)\right] \right\vert \\
&\leq &\frac{b-a}{2}\int_{0}^{1}\left\vert \left( 1-t\right) ^{\alpha
}-t^{\alpha }\right\vert \left[ t\left\vert f^{\prime }(a)\right\vert
+m(1-t)\left\vert f^{\prime }\left( \frac{b}{m}\right) \right\vert \right] dt
\\
&=&\frac{b-a}{2}\left\{ \int_{0}^{\frac{1}{2}}\left[ \left( 1-t\right)
^{\alpha }-t^{\alpha }\right] \left[ t\left\vert f^{\prime }(a)\right\vert
+m(1-t)\left\vert f^{\prime }\left( \frac{b}{m}\right) \right\vert \right]
dt\right. \\
&&\left. +\int_{\frac{1}{2}}^{1}\left[ t^{\alpha }-\left( 1-t\right)
^{\alpha }\right] \left[ t\left\vert f^{\prime }(a)\right\vert
+m(1-t)\left\vert f^{\prime }\left( \frac{b}{m}\right) \right\vert \right]
dt\right\} \\
&=&\frac{b-a}{2}\left[ \frac{2^{\alpha }-1}{2^{\alpha }(\alpha +1)}\right] %
\left[ \left\vert f^{\prime }(a)\right\vert +m\left\vert f^{\prime }\left( 
\frac{b}{m}\right) \right\vert \right]
\end{eqnarray*}%
where we use the facts that%
\begin{equation*}
\int_{0}^{\frac{1}{2}}\left( 1-t\right) ^{\alpha }tdt=\int_{\frac{1}{2}%
}^{1}t^{^{\alpha }}\left( 1-t\right) dt=\frac{1}{\left( \alpha +1\right)
\left( \alpha +2\right) }-\frac{\alpha +3}{2^{\alpha +2}\left( \alpha
+1\right) \left( \alpha +2\right) },
\end{equation*}%
\begin{equation*}
\int_{0}^{\frac{1}{2}}t^{\alpha +1}dt=\int_{\frac{1}{2}}^{1}\left(
1-t\right) ^{\alpha +1}dt=\frac{1}{2^{\alpha +2}\left( \alpha +2\right) },
\end{equation*}%
\begin{equation*}
\int_{0}^{\frac{1}{2}}\left( 1-t\right) ^{\alpha +1}dt=\int_{\frac{1}{2}%
}^{1}t^{\alpha +1}dt=\frac{1}{\left( \alpha +2\right) }-\frac{1}{2^{\alpha
+2}\left( \alpha +2\right) }
\end{equation*}%
and%
\begin{equation*}
\int_{0}^{\frac{1}{2}}t^{^{\alpha }}\left( 1-t\right) dt=\int_{\frac{1}{2}%
}^{1}\left( 1-t\right) ^{\alpha }tdt=\frac{\alpha +3}{2^{\alpha +2}\left(
\alpha +1\right) \left( \alpha +2\right) }
\end{equation*}%
which completes the proof for this case. Suppose now that $q>1.$ From Lemma %
\ref{lem 1.1}, $m-$convexity of $\left\vert f^{\prime }\right\vert ^{q}$ and
using the well-known H\"{o}lder's inequality we have successively%
\begin{eqnarray*}
&&\left\vert \frac{f(a)+f(b)}{2}-\frac{\Gamma (\alpha +1)}{2(b-a)^{\alpha }}%
\left[ J_{a^{+}}^{\alpha }f(b)+J_{b^{-}}^{\alpha }f(a)\right] \right\vert \\
&\leq &\frac{b-a}{2}\int_{0}^{1}\left\vert \left( 1-t\right) ^{\alpha
}-t^{\alpha }\right\vert \left\vert f^{\prime }(ta+(1-t)b)\right\vert dt \\
&=&\frac{b-a}{2}\int_{0}^{1}\left\vert \left( 1-t\right) ^{\alpha
}-t^{\alpha }\right\vert ^{1-\frac{1}{q}}\left\vert \left( 1-t\right)
^{\alpha }-t^{\alpha }\right\vert ^{\frac{1}{q}}\left\vert f^{\prime
}(ta+(1-t)b)\right\vert dt \\
&\leq &\frac{b-a}{2}\left( \int_{0}^{1}\left\vert \left( 1-t\right) ^{\alpha
}-t^{\alpha }\right\vert dt\right) ^{\frac{q-1}{q}}\left(
\int_{0}^{1}\left\vert \left( 1-t\right) ^{\alpha }-t^{\alpha }\right\vert
\left\vert f^{\prime }(ta+(1-t)b)\right\vert ^{q}dt\right) ^{\frac{1}{q}} \\
&\leq &\frac{b-a}{2}2^{1-\frac{1}{q}}\left[ \frac{2^{\alpha }-1}{2^{\alpha
}(\alpha +1)}\right] \left[ \left\vert f^{\prime }(a)\right\vert
^{q}+m\left\vert f^{\prime }\left( \frac{b}{m}\right) \right\vert ^{q}\right]
^{\frac{1}{q}}
\end{eqnarray*}%
where we use the fact that%
\begin{eqnarray*}
\int_{0}^{1}\left\vert \left( 1-t\right) ^{\alpha }-t^{\alpha }\right\vert
dt &=&\int_{0}^{\frac{1}{2}}\left[ \left( 1-t\right) ^{\alpha }-t^{\alpha }%
\right] dt+\int_{\frac{1}{2}}^{1}\left[ t^{\alpha }-\left( 1-t\right)
^{\alpha }\right] dt \\
&=&\frac{2}{\alpha +1}\left( 1-\frac{1}{2^{\alpha }}\right) .
\end{eqnarray*}
\end{proof}

\begin{corollary}
\label{co 2.1} Under the assumptions of Theorem \ref{teo 2.2} with $\alpha
\in (0,1],$ the following inequality holds:%
\begin{eqnarray*}
&&\left\vert \frac{f(a)+f(b)}{2}-\frac{\Gamma (\alpha +1)}{2(b-a)^{\alpha }}%
\left[ J_{a^{+}}^{\alpha }f(b)+J_{b^{-}}^{\alpha }f(a)\right] \right\vert \\
&\leq &\frac{b-a}{2}\left( \frac{1}{\alpha p+1}\right) ^{\frac{1}{p}}\left( 
\frac{\left\vert f^{\prime }(a)\right\vert ^{q}+m\left\vert f^{\prime
}\left( \frac{b}{m}\right) \right\vert ^{q}}{2}\right) ^{\frac{1}{q}}.
\end{eqnarray*}
\end{corollary}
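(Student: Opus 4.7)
The plan is to proceed along the same lines as the $q>1$ case in the proof of Theorem~\ref{teo 2.2}, but to apply H\"{o}lder's inequality with the ``clean'' split rather than the $|\cdot|^{1-1/q}|\cdot|^{1/q}$ split, and then exploit the restriction $\alpha\in(0,1]$ to control the resulting $L^{p}$-norm of the kernel. Starting from Lemma~\ref{lem 1.1} and taking absolute values gives
\begin{equation*}
\left\vert \frac{f(a)+f(b)}{2}-\frac{\Gamma (\alpha +1)}{2(b-a)^{\alpha }}\left[ J_{a^{+}}^{\alpha }f(b)+J_{b^{-}}^{\alpha }f(a)\right] \right\vert \leq \frac{b-a}{2}\int_{0}^{1}\left\vert (1-t)^{\alpha }-t^{\alpha }\right\vert \left\vert f^{\prime }(ta+(1-t)b)\right\vert dt,
\end{equation*}
and the classical H\"{o}lder inequality with exponents $p,q>1$ satisfying $1/p+1/q=1$ bounds the right-hand side by
\begin{equation*}
\frac{b-a}{2}\left( \int_{0}^{1}\left\vert (1-t)^{\alpha }-t^{\alpha }\right\vert ^{p}dt\right) ^{1/p}\left( \int_{0}^{1}\left\vert f^{\prime }(ta+(1-t)b)\right\vert ^{q}dt\right) ^{1/q}.
\end{equation*}

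The second factor is handled exactly as in Theorem~\ref{teo 2.2}: the $m$-convexity of $\left\vert f^{\prime }\right\vert ^{q}$ gives $\left\vert f^{\prime }(ta+(1-t)b)\right\vert ^{q}\leq t\left\vert f^{\prime }(a)\right\vert ^{q}+m(1-t)\left\vert f^{\prime }(b/m)\right\vert ^{q}$, whose integral over $[0,1]$ is $\tfrac{1}{2}\bigl(\left\vert f^{\prime }(a)\right\vert ^{q}+m\left\vert f^{\prime }(b/m)\right\vert ^{q}\bigr)$, which raised to the power $1/q$ is exactly the second factor in the target estimate.

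The first factor is the delicate step, and this is where the hypothesis $\alpha \in (0,1]$ is used. The key observation is that for $\alpha \in (0,1]$ the map $x\mapsto x^{\alpha}$ is subadditive on $[0,\infty)$, so for any $s,r\geq 0$ we have $\left\vert s^{\alpha }-r^{\alpha }\right\vert \leq \left\vert s-r\right\vert ^{\alpha }$. Applying this with $s=1-t$ and $r=t$ yields
\begin{equation*}
\left\vert (1-t)^{\alpha }-t^{\alpha }\right\vert \leq \left\vert 1-2t\right\vert ^{\alpha },
\end{equation*}
and hence
\begin{equation*}
\int_{0}^{1}\left\vert (1-t)^{\alpha }-t^{\alpha }\right\vert ^{p}dt\leq \int_{0}^{1}\left\vert 1-2t\right\vert ^{\alpha p}dt=\frac{1}{\alpha p+1},
\end{equation*}
where the last integral is evaluated by splitting at $t=1/2$ or by the substitution $u=1-2t$.

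Combining these two bounds gives the asserted inequality with constant $\bigl(1/(\alpha p+1)\bigr)^{1/p}$ times the $q$-average of $\left\vert f^{\prime }(a)\right\vert ^{q}$ and $m\left\vert f^{\prime }(b/m)\right\vert ^{q}$. The only nontrivial point is the subadditivity argument for the kernel; everything else is bookkeeping analogous to the proof of Theorem~\ref{teo 2.2}.
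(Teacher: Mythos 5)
Your proposal is correct and follows essentially the same route as the paper: the paper's own proof is a one-line remark that the argument parallels Theorem \ref{teo 2.2} together with the inequality $\left\vert t_{1}^{\alpha }-t_{2}^{\alpha }\right\vert \leq \left\vert t_{1}-t_{2}\right\vert ^{\alpha }$ for $\alpha \in (0,1]$, which is exactly the subadditivity bound you use on the kernel. You have merely filled in the details the paper omits, correctly noting that the appearance of $\left( \frac{1}{\alpha p+1}\right) ^{1/p}$ forces the clean H\"{o}lder split with exponents $p$ and $q$ rather than the $\left\vert \cdot \right\vert ^{1-1/q}\left\vert \cdot \right\vert ^{1/q}$ split used in the theorem itself.
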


\begin{proof}
It is similar the proof of Theorem \ref{teo 2.2}. In addition, we used the
following inequality 
\begin{equation*}
\left\vert t_{1}^{\alpha }-t_{2}^{\alpha }\right\vert \leq \left\vert
t_{1}-t_{2}\right\vert ^{\alpha },
\end{equation*}%
where $\alpha \in (0,1]$ and $t_{1},t_{2}\in \lbrack 0,1].$
\end{proof}

\begin{theorem}
\label{teo 2.3} Let $f:[0,\infty )\rightarrow 
\mathbb{R}
$ be a $m-$convex function with $m\in (0,1].$ If $f\in L_{1}[am,b],$ where $%
0\leq a<b,$ then the following inequality for fractional integrals with $%
\alpha >0$ holds:%
\begin{eqnarray}
&&  \label{2.4} \\
&&\frac{\Gamma (\alpha )}{m+1}\left\{ \frac{1}{\left( mb-a\right) ^{\alpha }}%
\left[ J_{a^{+}}^{\alpha }f(mb)+J_{mb^{-}}^{\alpha }f(mb)\right] +\frac{1}{%
\left( b-ma\right) ^{\alpha }}\left[ J_{ma^{+}}^{\alpha
}f(mb)+J_{b^{-}}^{\alpha }f(mb)\right] \right\}  \notag \\
&\leq &\frac{f(a)+f(b)}{\alpha }.  \notag
\end{eqnarray}
\end{theorem}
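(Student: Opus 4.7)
My plan is to prove (2.4) by reducing it, via $m$-convexity, to two one-dimensional inequalities on the intervals $[a,mb]$ and $[ma,b]$, then combining them.

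First, I would invoke $m$-convexity in its parametric form. For $x\in[a,mb]$, writing $x=ta+m(1-t)b$ with $t=(mb-x)/(mb-a)\in[0,1]$, the $m$-convexity of $f$ gives
\begin{equation*}
f(x)\leq \frac{mb-x}{mb-a}\,f(a)+m\cdot\frac{x-a}{mb-a}\,f(b).
\end{equation*}
Analogously, for $x\in[ma,b]$, writing $x=tb+m(1-t)a$ with $t=(x-ma)/(b-ma)$,
\begin{equation*}
f(x)\leq \frac{x-ma}{b-ma}\,f(b)+m\cdot\frac{b-x}{b-ma}\,f(a).
\end{equation*}

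Second, I would turn each pointwise bound into a fractional-integral bound by multiplying by the two natural kernels and integrating. On $[a,mb]$, multiply the first bound by $(mb-x)^{\alpha-1}$ (respectively by $(x-a)^{\alpha-1}$) and integrate from $a$ to $mb$. Using the substitution $u=(x-a)/(mb-a)$, every resulting integral becomes a beta integral, and the values
\begin{equation*}
\int_{a}^{mb}(mb-x)^{\alpha}\,dx=\int_{a}^{mb}(x-a)^{\alpha}\,dx=\frac{(mb-a)^{\alpha+1}}{\alpha+1},
\end{equation*}
\begin{equation*}
\int_{a}^{mb}(mb-x)^{\alpha-1}(x-a)\,dx=\int_{a}^{mb}(x-a)^{\alpha-1}(mb-x)\,dx=\frac{(mb-a)^{\alpha+1}}{\alpha(\alpha+1)}
\end{equation*}
drop out, recognizing the second one as $(mb-a)^{\alpha+1}\,B(\alpha,2)$ with $B(\alpha,2)=\Gamma(\alpha)\Gamma(2)/\Gamma(\alpha+2)$, exactly the same beta factor already used in the proof of Theorem~\ref{teo 2.1}. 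After dividing by $(mb-a)^{\alpha}$, adding the two resulting estimates, and observing that the cross-terms combine into a common factor $\alpha+1$, one obtains
\begin{equation*}
\frac{\Gamma(\alpha)}{(mb-a)^{\alpha}}\bigl[J_{a^{+}}^{\alpha}f(mb)+J_{(mb)^{-}}^{\alpha}f(a)\bigr]\leq \frac{f(a)+mf(b)}{\alpha}.
\end{equation*}

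Third, repeating the same two-kernel argument on $[ma,b]$ with the second pointwise bound yields the symmetric estimate
\begin{equation*}
\frac{\Gamma(\alpha)}{(b-ma)^{\alpha}}\bigl[J_{(ma)^{+}}^{\alpha}f(b)+J_{b^{-}}^{\alpha}f(ma)\bigr]\leq \frac{f(b)+mf(a)}{\alpha}.
\end{equation*}
Adding these two inequalities and dividing by $m+1$ produces exactly the right-hand side $(f(a)+f(b))/\alpha$ on the RHS of (2.4), and the bracketed expression on the LHS.

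The only real obstacle I anticipate is the bookkeeping of the two beta integrals and verifying that the factor $\frac{1}{\alpha+1}+\frac{1}{\alpha(\alpha+1)}=\frac{1}{\alpha}$ collapses cleanly after the two kernels are added; apart from this arithmetic check, the argument is a direct fractional-weight version of Dragomir's proof of (1.2) and should also specialize to it when $\alpha=1$.
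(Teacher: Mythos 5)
Your argument is correct and, after the substitution $t=(mb-x)/(mb-a)$ (resp.\ $t=(x-ma)/(b-ma)$), coincides with the paper's own proof, which multiplies the four parametric $m$-convexity inequalities by $t^{\alpha-1}$ and integrates over $[0,1]$: your two-kernel formulation in the $x$-variable and the collapse $\frac{1}{\alpha+1}+\frac{1}{\alpha(\alpha+1)}=\frac{1}{\alpha}$ are exactly the same beta-integral computation. One incidental merit of your write-up is that the evaluation points you obtain, $J_{(mb)^{-}}^{\alpha}f(a)$, $J_{(ma)^{+}}^{\alpha}f(b)$ and $J_{b^{-}}^{\alpha}f(ma)$, are the ones actually forced by Definition \ref{def 1.2}, whereas the theorem as printed writes all four fractional integrals evaluated at $mb$, which appears to be a typographical slip carried through the paper's proof.
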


\begin{proof}
By the $m-$convexity of $f$ we can write 
\begin{equation*}
f(ta+m(1-t)b)\leq tf(a)+m(1-t)f(b),
\end{equation*}%
\begin{equation*}
f((1-t)a+mtb)\leq (1-t)f(a)+mtf(b),
\end{equation*}%
\begin{equation*}
f(tb+m(1-t)a)\leq tf(b)+m(1-t)f(a)
\end{equation*}%
and%
\begin{equation*}
f((1-t)b+mta)\leq (1-t)f(b)+mtf(a)
\end{equation*}%
for all $t\in \lbrack 0,1]$ and $0\leq a<b.$

If we add the above inequalities we get%
\begin{eqnarray}
&&  \label{2.5} \\
&&f(ta+m(1-t)b)+f((1-t)a+mtb)+f(tb+m(1-t)a)+f((1-t)b+mta)  \notag \\
&\leq &\left( m+1\right) \left( f(a)+f(b)\right) .  \notag
\end{eqnarray}%
By multiplying both sides of (\ref{2.5}) with $t^{\alpha -1},$ then
integrating the resulting inequality with respect to $t$ over $[0,1],$ we
obtain%
\begin{eqnarray*}
&&\int_{0}^{1}t^{\alpha -1}\left[
f(ta+m(1-t)b)+f((1-t)a+mtb)+f(tb+m(1-t)a)+f((1-t)b+mta)\right] dt \\
&\leq &\left( m+1\right) \left( f(a)+f(b)\right) \int_{0}^{1}t^{\alpha -1}dt.
\end{eqnarray*}%
It is easy to see that%
\begin{equation*}
\int_{0}^{1}t^{\alpha -1}f(ta+m(1-t)b)dt=\frac{\Gamma (\alpha )}{\left(
mb-a\right) ^{\alpha }}J_{a^{+}}^{\alpha }f(mb),
\end{equation*}%
\begin{equation*}
\int_{0}^{1}t^{\alpha -1}f((1-t)a+mtb)dt=\frac{\Gamma (\alpha )}{\left(
mb-a\right) ^{\alpha }}J_{mb^{-}}^{\alpha }f(mb),
\end{equation*}%
\begin{equation*}
\int_{0}^{1}t^{\alpha -1}f(tb+m(1-t)a)dt=\frac{\Gamma (\alpha )}{\left(
b-ma\right) ^{\alpha }}J_{b^{-}}^{\alpha }f(mb)
\end{equation*}%
and%
\begin{equation*}
\int_{0}^{1}t^{\alpha -1}f((1-t)b+mta)dt=\frac{\Gamma (\alpha )}{\left(
b-ma\right) ^{\alpha }}J_{ma^{+}}^{\alpha }f(mb).
\end{equation*}%
So the proof is completed.
\end{proof}

\begin{remark}
\label{rem 2.2} If we choose $\alpha =1$ in Theorem \ref{teo 2.3}, then the
inequality (\ref{2.4}) becomes the inequality in (\ref{1.2}).
\end{remark}

\section{Inequalities for $\left( \protect\alpha ,m\right) -$convex functions%
}

\begin{theorem}
\label{a0}Let $f:[a,b]\rightarrow 
\mathbb{R}
$ be a positive function with $0\leq a<b$ and $f\in L_{1}[a,b].$ If $f$ is
an $\left( \alpha _{1},m\right) -$convex function on $[a,b],$ then the
following inequalities hold for fractional integrals with $\alpha >0$ and $%
\left( \alpha _{1},m\right) \in (0,1]^{2}$:%
\begin{eqnarray}
&&  \label{a} \\
\frac{\Gamma (\alpha )}{(b-a)^{\alpha }}J_{a^{+}}^{\alpha }f(b) &\leq &\frac{%
1}{\alpha +\alpha _{1}}f(a)+\frac{m\alpha _{1}}{\alpha \left( \alpha +\alpha
_{1}\right) }f\left( \frac{b}{m}\right) ,  \notag \\
\frac{\Gamma (\alpha )}{(b-a)^{\alpha }}J_{b^{-}}^{\alpha }f(a) &\leq &\frac{%
1}{\alpha +\alpha _{1}}f(b)+\frac{m\alpha _{1}}{\alpha \left( \alpha +\alpha
_{1}\right) }f\left( \frac{a}{m}\right) .  \notag
\end{eqnarray}
\end{theorem}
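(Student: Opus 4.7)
The plan is to follow exactly the template of Theorem~\ref{teo 2.1}, substituting the $(\alpha_1,m)$-convexity inequality in place of the $m$-convexity inequality. Since $f$ is $(\alpha_1,m)$-convex on $[a,b]$, for every $t\in[0,1]$ we have the two pointwise bounds
\begin{equation*}
f(ta+(1-t)b)\leq t^{\alpha_1}f(a)+m(1-t^{\alpha_1})f\!\left(\tfrac{b}{m}\right),
\end{equation*}
\begin{equation*}
f(tb+(1-t)a)\leq t^{\alpha_1}f(b)+m(1-t^{\alpha_1})f\!\left(\tfrac{a}{m}\right),
\end{equation*}
which will play the role that (\ref{2.2}) and (\ref{2.3}) played before.

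Next I multiply the first inequality through by $t^{\alpha-1}$ and integrate over $[0,1]$. The left-hand side is identified, exactly as in the proof of Theorem~\ref{teo 2.1}, with $\frac{\Gamma(\alpha)}{(b-a)^{\alpha}}J_{a^{+}}^{\alpha}f(b)$ via the substitution $u=ta+(1-t)b$. For the right-hand side I need the two elementary integrals
\begin{equation*}
\int_{0}^{1}t^{\alpha-1}\cdot t^{\alpha_{1}}\,dt=\frac{1}{\alpha+\alpha_{1}},\qquad \int_{0}^{1}t^{\alpha-1}(1-t^{\alpha_{1}})\,dt=\frac{1}{\alpha}-\frac{1}{\alpha+\alpha_{1}}=\frac{\alpha_{1}}{\alpha(\alpha+\alpha_{1})}.
\end{equation*}
Plugging these in yields the first inequality in (\ref{a}) immediately. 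The second inequality in (\ref{a}) is obtained by the identical procedure applied to the second pointwise bound, using the analogous substitution $u=tb+(1-t)a$ to recognise the Riemann--Liouville integral $J_{b^{-}}^{\alpha}f(a)$.

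There is essentially no obstacle here; the argument is entirely parallel to Theorem~\ref{teo 2.1}, with the only genuine change being that the Beta-function integral $\beta(\alpha,2)$ gets replaced by the two powers $t^{\alpha_{1}}$ and $1-t^{\alpha_{1}}$ integrated against $t^{\alpha-1}$. The one thing worth double-checking is the bookkeeping of the constants so that the factor $\frac{m\alpha_{1}}{\alpha(\alpha+\alpha_{1})}$ comes out correctly; a brief sanity check is that setting $\alpha_{1}=1$ should recover Theorem~\ref{teo 2.1}, and indeed $\frac{1}{\alpha+1}$ and $\frac{m}{\alpha(\alpha+1)}=m\,\frac{\Gamma(\alpha)\Gamma(2)}{\Gamma(\alpha+2)}$ match (\ref{2.1}) exactly.
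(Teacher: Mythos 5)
Your proposal is correct and follows essentially the same route as the paper's own proof: the same two $(\alpha_1,m)$-convexity bounds, multiplication by $t^{\alpha-1}$, the same elementary integrals $\int_0^1 t^{\alpha+\alpha_1-1}\,dt=\frac{1}{\alpha+\alpha_1}$ and $\int_0^1 t^{\alpha-1}(1-t^{\alpha_1})\,dt=\frac{\alpha_1}{\alpha(\alpha+\alpha_1)}$, and the same identification of the left-hand sides with the Riemann--Liouville integrals. The sanity check at $\alpha_1=1$ is a nice touch but not needed.
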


\begin{proof}
Since $f$ is $\left( \alpha _{1},m\right) -$convex function on $[a,b],$ we
know that for any $t\in \lbrack 0,1]$%
\begin{equation}
f(ta+(1-t)b)\leq t^{\alpha _{1}}f(a)+m(1-t^{\alpha _{1}})f\left( \frac{b}{m}%
\right)  \label{a1}
\end{equation}%
and 
\begin{equation}
f(tb+(1-t)a)\leq t^{\alpha _{1}}f(b)+m(1-t^{\alpha _{1}})f\left( \frac{a}{m}%
\right) .  \label{a2}
\end{equation}%
By multiplying both sides of (\ref{a1}) by $t^{\alpha -1},$ then by
integrating the resulting inequality with respect to $t$ over $[0,1],$ we get%
\begin{eqnarray*}
\int_{0}^{1}t^{\alpha -1}f(ta+(1-t)b)dt &\leq &\int_{0}^{1}t^{\alpha -1} 
\left[ t^{\alpha _{1}}f(a)+m(1-t^{\alpha _{1}})f\left( \frac{b}{m}\right) %
\right] \\
&=&\frac{1}{\alpha +\alpha _{1}}f(a)+\frac{m\alpha _{1}}{\alpha \left(
\alpha +\alpha _{1}\right) }f\left( \frac{b}{m}\right) .
\end{eqnarray*}%
It is easy to see that $\int_{0}^{1}t^{\alpha -1}f(ta+(1-t)b)dt=\frac{\Gamma
(\alpha )}{(b-a)^{\alpha }}J_{a^{+}}^{\alpha }f(b)$, by using this fact the
proof of the first inequality is completed.

Similarly, by multiplying both sides of (\ref{a2}) by $t^{\alpha -1},$ then
by integration, the proof of the second inequality is completed.
\end{proof}

\begin{corollary}
If we choose $\alpha =\alpha _{1}$ with $\alpha ,\alpha _{1}\in (0,1]$ in
Theorem \ref{a0}$,$ we have the following inequalities;%
\begin{eqnarray}
\frac{\Gamma (\alpha )}{(b-a)^{\alpha }}J_{a^{+}}^{\alpha }f(b) &\leq &\frac{%
1}{2\alpha }\left[ f(a)+mf\left( \frac{b}{m}\right) \right] ,  \notag \\
\frac{\Gamma (\alpha )}{(b-a)^{\alpha }}J_{b^{-}}^{\alpha }f(a) &\leq &\frac{%
1}{2\alpha }\left[ f(b)+mf\left( \frac{a}{m}\right) \right] .  \notag
\end{eqnarray}
\end{corollary}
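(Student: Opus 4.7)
The plan is to obtain this corollary as an immediate specialization of Theorem \ref{a0}, so the work is essentially algebraic bookkeeping rather than new analysis. First I would verify that the hypotheses of Theorem \ref{a0} remain intact under the substitution $\alpha_1=\alpha$: since we are given $\alpha,\alpha_1\in(0,1]$, the pair $(\alpha_1,m)=(\alpha,m)$ still lies in $(0,1]^2$, and $f$ remains an $(\alpha,m)$-convex positive function in $L_1[a,b]$, so nothing about the setup breaks.

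Next I would substitute $\alpha_1=\alpha$ into the right-hand side of the first inequality in (\ref{a}). The coefficient of $f(a)$ becomes $\frac{1}{\alpha+\alpha}=\frac{1}{2\alpha}$, and the coefficient of $mf(b/m)$ becomes $\frac{\alpha_1}{\alpha(\alpha+\alpha_1)}=\frac{\alpha}{\alpha\cdot 2\alpha}=\frac{1}{2\alpha}$. Factoring out $\frac{1}{2\alpha}$ then yields the claimed bound
\begin{equation*}
\frac{\Gamma(\alpha)}{(b-a)^{\alpha}}J_{a^{+}}^{\alpha}f(b)\leq \frac{1}{2\alpha}\left[f(a)+mf\left(\frac{b}{m}\right)\right].
\end{equation*}
The second inequality follows by the identical simplification applied to the right-endpoint version in (\ref{a}), swapping the roles of $a$ and $b$.

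The only point that might look like an obstacle is checking that the substitution is actually admissible, but since Theorem \ref{a0} is stated for arbitrary $\alpha>0$ and $(\alpha_1,m)\in(0,1]^2$, and we restrict further to $\alpha\in(0,1]$ here, the hypotheses hold a fortiori. There is no genuine analytic difficulty; the corollary is just the case $\alpha=\alpha_1$ of the preceding theorem, recorded separately because the resulting symmetric bound $\frac{1}{2\alpha}[f(a)+mf(b/m)]$ is a particularly clean fractional analogue of the right-hand half of (\ref{1.1}).
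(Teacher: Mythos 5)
Your proposal is correct and matches the paper's (implicit) argument exactly: the corollary is obtained by the direct substitution $\alpha_1=\alpha$ into the two inequalities of Theorem \ref{a0}, and your algebra $\frac{m\alpha_1}{\alpha(\alpha+\alpha_1)}=\frac{m}{2\alpha}$ checks out. Nothing further is needed.
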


\begin{remark}
If we choose $\alpha =\alpha _{1}=1$ in Theorem \ref{a0}$,$ then the
inequalities reduces to the inequality (\ref{1.1}).
\end{remark}

\begin{theorem}
\label{a3}Let $f:I^{\circ }\subset \lbrack 0,\infty )\rightarrow 
\mathbb{R}
$, be a differentiable function on $I^{\circ }$ such that $f^{\prime }\in
L[a,b]$ where $a,b\in I,$ $a<b.$ If $\left\vert f^{\prime }\right\vert ^{q}$
is an $\left( \alpha _{1},m\right) -$convex function and $\left\vert
f^{\prime }\right\vert $ is decreasing on $[a,b],$ then the following
inequalities hold for fractional integrals with $\alpha >0$, $\left( \alpha
_{1},m\right) \in (0,1]^{2}$ ; 
\begin{eqnarray*}
&&\left\vert \frac{f(a)+f(b)}{2}-\frac{\Gamma (\alpha +1)}{2(b-a)^{\alpha }}%
\left[ J_{a^{+}}^{\alpha }f(b)+J_{b^{-}}^{\alpha }f(a)\right] \right\vert \\
&\leq &\frac{b-a}{2}\left[ \frac{2^{\alpha }-1}{2^{\alpha -1}\left( \alpha
+1\right) }\right] ^{\frac{q-1}{q}}\left[ \left( \frac{2^{\alpha +\alpha
_{1}}-1}{2^{\alpha +\alpha _{1}}\left( \alpha +\alpha _{1}+1\right) }\right)
\left( \left\vert f^{\prime }(a)\right\vert ^{q}-m\left\vert f^{\prime
}\left( \frac{b}{m}\right) \right\vert ^{q}\right) \right. \\
&&\left. +\frac{m}{\alpha +1}\left\vert f^{\prime }\left( \frac{b}{m}\right)
\right\vert ^{q}\left( 1-\frac{1}{2^{\alpha }}\right) \right] ^{\frac{1}{q}}.
\end{eqnarray*}
\end{theorem}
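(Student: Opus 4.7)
The plan is to mimic the proof of Theorem~\ref{teo 2.2}, replacing the $m$-convexity bound on $|f'|^{q}$ by the $(\alpha_{1},m)$-convexity bound. First, I would apply Lemma~\ref{lem 1.1} to obtain
\[
\frac{f(a)+f(b)}{2}-\frac{\Gamma(\alpha+1)}{2(b-a)^{\alpha}}\bigl[J_{a^{+}}^{\alpha}f(b)+J_{b^{-}}^{\alpha}f(a)\bigr]=\frac{b-a}{2}\int_{0}^{1}\bigl[(1-t)^{\alpha}-t^{\alpha}\bigr]f'(ta+(1-t)b)\,dt,
\]
take absolute values, and pull them inside the integral via the triangle inequality.

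Next, I would apply the power-mean (H\"older) inequality, splitting the weight as $|(1-t)^{\alpha}-t^{\alpha}|^{(q-1)/q}\cdot|(1-t)^{\alpha}-t^{\alpha}|^{1/q}$. This yields two factors. The first is the $(q-1)/q$-th power of $\int_{0}^{1}|(1-t)^{\alpha}-t^{\alpha}|\,dt=\tfrac{2^{\alpha}-1}{2^{\alpha-1}(\alpha+1)}$, a value already recorded in the proof of Theorem~\ref{teo 2.2}. For the second factor, the $(\alpha_{1},m)$-convexity of $|f'|^{q}$ lets me majorize the pointwise integrand by
\[
t^{\alpha_{1}}|f'(a)|^{q}+m(1-t^{\alpha_{1}})|f'(b/m)|^{q}=\bigl(|f'(a)|^{q}-m|f'(b/m)|^{q}\bigr)t^{\alpha_{1}}+m|f'(b/m)|^{q},
\]
so that the inner integral splits into
\[
\bigl(|f'(a)|^{q}-m|f'(b/m)|^{q}\bigr)\int_{0}^{1}|(1-t)^{\alpha}-t^{\alpha}|t^{\alpha_{1}}\,dt+m|f'(b/m)|^{q}\int_{0}^{1}|(1-t)^{\alpha}-t^{\alpha}|\,dt.
\]

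The main technical task, and the principal obstacle, is the evaluation of $\int_{0}^{1}|(1-t)^{\alpha}-t^{\alpha}|t^{\alpha_{1}}\,dt$. I would split at $t=1/2$, using $(1-t)^{\alpha}\ge t^{\alpha}$ on $[0,1/2]$ and the reverse on $[1/2,1]$, apply the substitution $u=1-t$ where convenient, and expect the resulting pieces to collapse (as for $\alpha_{1}=1$ in Theorem~\ref{teo 2.2}) to the closed form $\tfrac{2^{\alpha+\alpha_{1}}-1}{2^{\alpha+\alpha_{1}}(\alpha+\alpha_{1}+1)}$ appearing in the statement. The role of the hypothesis that $|f'|$ is decreasing on $[a,b]$ is to guarantee that $|f'(a)|^{q}-m|f'(b/m)|^{q}\ge 0$: since $m\in(0,1]$ gives $b\le b/m$, monotonicity (suitably extended) forces $|f'(a)|\ge|f'(b/m)|$, and hence $|f'(a)|^{q}\ge|f'(b/m)|^{q}\ge m|f'(b/m)|^{q}$. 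This ensures the bracketed quantity is nonnegative, so that its $1/q$-th power is well defined; assembling the two H\"older factors then yields the stated bound.
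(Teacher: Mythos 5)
Your strategy coincides with the paper's treatment of the case $q>1$: Lemma \ref{lem 1.1}, the H\"{o}lder split of the weight $\left\vert (1-t)^{\alpha }-t^{\alpha }\right\vert $, the value $\int_{0}^{1}\left\vert (1-t)^{\alpha }-t^{\alpha }\right\vert dt=\frac{2^{\alpha }-1}{2^{\alpha -1}(\alpha +1)}$, and the $(\alpha _{1},m)$-convexity bound rearranged as $\left( \left\vert f^{\prime }(a)\right\vert ^{q}-m\left\vert f^{\prime }(b/m)\right\vert ^{q}\right) t^{\alpha _{1}}+m\left\vert f^{\prime }(b/m)\right\vert ^{q}$. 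However, the one step you defer --- the evaluation of $\int_{0}^{1}\left\vert (1-t)^{\alpha }-t^{\alpha }\right\vert t^{\alpha _{1}}dt$ --- is precisely the step that does not go through, so this is a genuine gap and not a routine omission. Splitting at $t=1/2$ gives
\[
\int_{0}^{1}\left\vert (1-t)^{\alpha }-t^{\alpha }\right\vert t^{\alpha _{1}}dt=\frac{2^{\alpha +\alpha _{1}}-1}{2^{\alpha +\alpha _{1}}\left( \alpha +\alpha _{1}+1\right) }+\int_{0}^{1/2}(1-t)^{\alpha }t^{\alpha _{1}}dt-\int_{1/2}^{1}(1-t)^{\alpha }t^{\alpha _{1}}dt,
\]
and the two incomplete-beta terms do not cancel in general: substituting $u=1-t$ turns their difference into $\int_{1/2}^{1}\left[ u^{\alpha }(1-u)^{\alpha _{1}}-u^{\alpha _{1}}(1-u)^{\alpha }\right] du$, which is strictly positive for $\alpha >\alpha _{1}$, strictly negative for $\alpha <\alpha _{1}$, and vanishes only when $\alpha =\alpha _{1}$. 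Your own sanity check already fails: for $\alpha _{1}=1$ the true value is $\frac{2^{\alpha }-1}{2^{\alpha }(\alpha +1)}$ (this is what the computation in Theorem \ref{teo 2.2} actually yields), which differs from $\frac{2^{\alpha +1}-1}{2^{\alpha +1}(\alpha +2)}$ unless $\alpha =1$. The paper's proof conceals the same difficulty behind the assertion $\int_{0}^{1/2}t^{\alpha _{1}}(1-t)^{\alpha }dt=\int_{1/2}^{1}t^{\alpha _{1}}(1-t)^{\alpha }dt$, which is likewise false except when $\alpha _{1}=\alpha $; so your expected ``collapse'' reproduces the paper's error rather than supplying a proof. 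The bound as displayed is reliable only in the case $\alpha =\alpha _{1}$ of the subsequent corollary; for $\alpha \neq \alpha _{1}$ the incomplete beta functions must be kept explicitly.

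Two further points. First, in your decomposition the $m\left\vert f^{\prime }(b/m)\right\vert ^{q}$ term acquires the coefficient $\int_{0}^{1}\left\vert (1-t)^{\alpha }-t^{\alpha }\right\vert dt=\frac{2}{\alpha +1}\left( 1-2^{-\alpha }\right) $, which is twice the coefficient $\frac{1}{\alpha +1}\left( 1-2^{-\alpha }\right) $ appearing in the statement, so even granting the cancellation you would not land on the displayed inequality. Second, your use of the hypothesis that $\left\vert f^{\prime }\right\vert $ is decreasing (to guarantee $\left\vert f^{\prime }(a)\right\vert ^{q}-m\left\vert f^{\prime }(b/m)\right\vert ^{q}\geq 0$) needs monotonicity on $[a,b/m]$ rather than on $[a,b]$ as stated, since $b/m\geq b$; the paper's own proof never invokes this hypothesis, but your reading of its intended role is the natural one.
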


\begin{proof}
Suppose that $q=1.$ From Lemma \ref{lem 1.1} and by using the properties of
modulus, we have 
\begin{eqnarray*}
&&\left\vert \frac{f(a)+f(b)}{2}-\frac{\Gamma (\alpha +1)}{2(b-a)^{\alpha }}%
\left[ J_{a^{+}}^{\alpha }f(b)+J_{b^{-}}^{\alpha }f(a)\right] \right\vert \\
&\leq &\frac{b-a}{2}\int_{0}^{1}\left\vert \left( 1-t\right) ^{\alpha
}-t^{\alpha }\right\vert \left\vert f^{\prime }(ta+(1-t)b)\right\vert dt.
\end{eqnarray*}%
Since $\left\vert f^{\prime }\right\vert $ is $\left( \alpha _{1},m\right) -$%
convex on $[a,b],$ we have 
\begin{eqnarray*}
&&\left\vert \frac{f(a)+f(b)}{2}-\frac{\Gamma (\alpha +1)}{2(b-a)^{\alpha }}%
\left[ J_{a^{+}}^{\alpha }f(b)+J_{b^{-}}^{\alpha }f(a)\right] \right\vert \\
&\leq &\frac{b-a}{2}\int_{0}^{1}\left\vert \left( 1-t\right) ^{\alpha
}-t^{\alpha }\right\vert \left[ t^{\alpha }\left\vert f^{\prime
}(a)\right\vert +m(1-t^{\alpha })\left\vert f^{\prime }\left( \frac{b}{m}%
\right) \right\vert \right] dt \\
&=&\frac{b-a}{2}\left\{ \int_{0}^{\frac{1}{2}}\left[ \left( 1-t\right)
^{\alpha }-t^{\alpha }\right] \left[ t^{\alpha }\left\vert f^{\prime
}(a)\right\vert +m(1-t^{\alpha })\left\vert f^{\prime }\left( \frac{b}{m}%
\right) \right\vert \right] dt\right. \\
&&\left. +\int_{\frac{1}{2}}^{1}\left[ t^{\alpha }-\left( 1-t\right)
^{\alpha }\right] \left[ t^{\alpha }\left\vert f^{\prime }(a)\right\vert
+m(1-t^{\alpha })\left\vert f^{\prime }\left( \frac{b}{m}\right) \right\vert %
\right] dt\right\} .
\end{eqnarray*}%
By computing the above integrals, we obtain%
\begin{eqnarray*}
&&\left\vert \frac{f(a)+f(b)}{2}-\frac{\Gamma (\alpha +1)}{2(b-a)^{\alpha }}%
\left[ J_{a^{+}}^{\alpha }f(b)+J_{b^{-}}^{\alpha }f(a)\right] \right\vert \\
&\leq &\frac{b-a}{2}\left[ \left( \frac{2^{\alpha +\alpha _{1}}-1}{2^{\alpha
+\alpha _{1}}\left( \alpha +\alpha _{1}+1\right) }\right) \left( \left\vert
f^{\prime }(a)\right\vert -m\left\vert f^{\prime }\left( \frac{b}{m}\right)
\right\vert \right) \right. \\
&&\left. +\frac{m}{\alpha +1}\left\vert f^{\prime }\left( \frac{b}{m}\right)
\right\vert \left( 1-\frac{1}{2^{\alpha }}\right) \right]
\end{eqnarray*}%
where we used the fact that%
\begin{equation*}
\int_{0}^{\frac{1}{2}}t^{\alpha _{1}}(1-t)^{\alpha }dt=\int_{\frac{1}{2}%
}^{1}t^{\alpha _{1}}(1-t)^{\alpha }dt=\beta (\frac{1}{2};\alpha
_{1}+1,\alpha +1).
\end{equation*}%
This completes the proof of this case. Suppose now that $q>1.$ Again, from
Lemma \ref{lem 1.1} and by applying well-known H\"{o}lder's inequality, we
have 
\begin{eqnarray*}
&&\left\vert \frac{f(a)+f(b)}{2}-\frac{\Gamma (\alpha +1)}{2(b-a)^{\alpha }}%
\left[ J_{a^{+}}^{\alpha }f(b)+J_{b^{-}}^{\alpha }f(a)\right] \right\vert \\
&\leq &\frac{b-a}{2}\left( \int_{0}^{1}\left\vert \left( 1-t\right) ^{\alpha
}-t^{\alpha }\right\vert dt\right) ^{\frac{q-1}{q}}\left(
\int_{0}^{1}\left\vert \left( 1-t\right) ^{\alpha }-t^{\alpha }\right\vert
\left\vert f^{\prime }(ta+(1-t)b)\right\vert ^{q}dt\right) ^{\frac{1}{q}}
\end{eqnarray*}%
By computing the above integrals and by using $\left( \alpha _{1},m\right) -$%
convexity of $\left\vert f^{\prime }\right\vert ^{q},$ we deduce 
\begin{eqnarray*}
&&\left\vert \frac{f(a)+f(b)}{2}-\frac{\Gamma (\alpha +1)}{2(b-a)^{\alpha }}%
\left[ J_{a^{+}}^{\alpha }f(b)+J_{b^{-}}^{\alpha }f(a)\right] \right\vert \\
&\leq &\frac{b-a}{2}\left[ \frac{2^{\alpha }-1}{2^{\alpha -1}\left( \alpha
+1\right) }\right] ^{\frac{q-1}{q}}\left[ \left( \frac{2^{\alpha +\alpha
_{1}}-1}{2^{\alpha +\alpha _{1}}\left( \alpha +\alpha _{1}+1\right) }\right)
\left( \left\vert f^{\prime }(a)\right\vert ^{q}-m\left\vert f^{\prime
}\left( \frac{b}{m}\right) \right\vert ^{q}\right) \right. \\
&&\left. +\frac{m}{\alpha +1}\left\vert f^{\prime }\left( \frac{b}{m}\right)
\right\vert ^{q}\left( 1-\frac{1}{2^{\alpha }}\right) \right] ^{\frac{1}{q}}.
\end{eqnarray*}%
Which completes the proof.
\end{proof}

\begin{corollary}
If we choose $\alpha =\alpha _{1}$ with $\alpha ,\alpha _{1}\in (0,1]$ in
Theorem \ref{a3}$,$ we have the inequality;%
\begin{eqnarray*}
&&\left\vert \frac{f(a)+f(b)}{2}-\frac{\Gamma (\alpha +1)}{2(b-a)^{\alpha }}%
\left[ J_{a^{+}}^{\alpha }f(b)+J_{b^{-}}^{\alpha }f(a)\right] \right\vert \\
&\leq &\frac{b-a}{2}\left[ \frac{2^{\alpha }-1}{2^{\alpha -1}\left( \alpha
+1\right) }\right] ^{\frac{q-1}{q}}\left[ \left( \frac{2^{2\alpha }-1}{%
2^{2\alpha }\left( 2\alpha +1\right) }\right) \left( \left\vert f^{\prime
}(a)\right\vert ^{q}-m\left\vert f^{\prime }\left( \frac{b}{m}\right)
\right\vert ^{q}\right) \right. \\
&&\left. +\frac{m}{\alpha +1}\left\vert f^{\prime }\left( \frac{b}{m}\right)
\right\vert ^{q}\left( 1-\frac{1}{2^{\alpha }}\right) \right] ^{\frac{1}{q}}.
\end{eqnarray*}
\end{corollary}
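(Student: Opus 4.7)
The plan is to derive this corollary as a direct specialization of Theorem \ref{a3}, in which the two parameters controlling the $(\alpha_1,m)$-convexity and the fractional order of the Riemann-Liouville operator are chosen to coincide. No new integral estimate is needed; the entire argument is an algebraic simplification of the bound already proved in Theorem \ref{a3}.

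Concretely, I would start from the inequality established in Theorem \ref{a3},
\begin{eqnarray*}
&&\left\vert \frac{f(a)+f(b)}{2}-\frac{\Gamma (\alpha +1)}{2(b-a)^{\alpha }}
\left[ J_{a^{+}}^{\alpha }f(b)+J_{b^{-}}^{\alpha }f(a)\right] \right\vert \\
&\leq &\frac{b-a}{2}\left[ \frac{2^{\alpha }-1}{2^{\alpha -1}\left( \alpha
+1\right) }\right] ^{\frac{q-1}{q}}\left[ \left( \frac{2^{\alpha +\alpha
_{1}}-1}{2^{\alpha +\alpha _{1}}\left( \alpha +\alpha _{1}+1\right) }\right)
\left( \left\vert f^{\prime }(a)\right\vert ^{q}-m\left\vert f^{\prime
}\left( \tfrac{b}{m}\right) \right\vert ^{q}\right) \right. \\
&&\left. +\frac{m}{\alpha +1}\left\vert f^{\prime }\left( \tfrac{b}{m}\right)
\right\vert ^{q}\left( 1-\tfrac{1}{2^{\alpha }}\right) \right] ^{\frac{1}{q}},
\end{eqnarray*}
which is valid under the hypotheses that $|f'|^q$ is $(\alpha_1,m)$-convex, $|f'|$ is decreasing on $[a,b]$, $\alpha>0$, and $(\alpha_1,m)\in(0,1]^2$. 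Since in the corollary both $\alpha$ and $\alpha_1$ are assumed to lie in $(0,1]$ and the hypothesis $\alpha=\alpha_1$ leaves the class of admissible $(\alpha_1,m)$-convex functions unchanged, Theorem \ref{a3} applies verbatim.

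The only step to carry out is the substitution $\alpha_1 = \alpha$ in the only place where $\alpha_1$ appears, namely in the factor
\[
\frac{2^{\alpha+\alpha_1}-1}{2^{\alpha+\alpha_1}(\alpha+\alpha_1+1)}.
\]
Replacing $\alpha+\alpha_1$ by $2\alpha$ turns this into $\dfrac{2^{2\alpha}-1}{2^{2\alpha}(2\alpha+1)}$, while every other term on the right-hand side (the outer $\bigl[(2^{\alpha}-1)/2^{\alpha-1}(\alpha+1)\bigr]^{(q-1)/q}$ factor coming from $\int_0^1|(1-t)^\alpha-t^\alpha|\,dt$, and the remainder term $\frac{m}{\alpha+1}|f'(b/m)|^q(1-2^{-\alpha})$) is independent of $\alpha_1$ and therefore left intact. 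Collecting these simplifications produces precisely the inequality claimed.

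There is no real obstacle to anticipate: the work is essentially bookkeeping, and one should simply double-check that $\alpha_1$ does not enter either through the outer Hölder factor or through the term $\frac{m}{\alpha+1}|f'(b/m)|^q(1-2^{-\alpha})$ before declaring the substitution complete.
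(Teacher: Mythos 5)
Your proposal is correct and matches the paper's (implicit) argument exactly: the corollary is obtained by substituting $\alpha_1=\alpha$ into the bound of Theorem \ref{a3}, and $\alpha_1$ indeed appears only in the factor $\frac{2^{\alpha+\alpha_1}-1}{2^{\alpha+\alpha_1}(\alpha+\alpha_1+1)}$, which becomes $\frac{2^{2\alpha}-1}{2^{2\alpha}(2\alpha+1)}$. Nothing further is required.
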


\begin{remark}
Identical result of Theorem \ref{teo 2.3}, can be stated for $\left( \alpha
,m\right) -$convex functions, but we omit the details.
\end{remark}

\end{document}